\tikzset{commutative diagrams/.cd}
\numberwithin{equation}{section}
\newtheorem{theorem}{Theorem}[section]
\newtheorem{corollary}[theorem]{Corollary}
\newtheorem{lemma}[theorem]{Lemma}
\newtheorem{proposition}[theorem]{Proposition}
\theoremstyle{definition}
\newtheorem{definition}[theorem]{Definition}
\newtheorem{definition-theorem}[theorem]{Definition-Theorem}
\newtheorem{question}[theorem]{Question}
\theoremstyle{remark}
\newtheorem*{remark*}{Remark}
\newcommand\Z{{\mathbb Z}}
\newcommand\Q{{\mathbb Q}}
\newcommand\C{{\mathbb C}}
\newcommand\Fp{{{\mathbb F}_p}}
\newcommand\Fq{{\mathbb F}_q}
\newcommand\Zp{{\mathbb Z}_p}
\DeclareMathOperator{\im}{im}
\DeclareMathOperator{\Hom}{Hom}
\DeclareMathOperator{\Aut}{Aut}
\DeclareMathOperator{\GL}{GL}
\DeclareMathOperator{\Mat}{Mat}
\newcommand\subeq{\subseteq}
\newcommand\hhat{\widehat} 
\newcommand\onto{\twoheadrightarrow}
\DeclarePairedDelimiter{\abs}{\lvert}{\rvert}
\DeclarePairedDelimiter{\parens}{\lparen}{\rparen}
\newcommand{\Zhat}{\hhat{Z}}
\newcommand{\Mod}{\mathbf{Mod}}
\newcommand{\Surj}{\mathrm{Surj}}
\DeclareMathOperator*{\Prob}{Prob}
\newcommand{\cok}{\mathrm{cok}}
\newcommand{\Fl}{\mathbf{Fl}}
\newcommand{\bfcok}{\mathbf{cok}}
\newcommand{\calFl}{\mathcal{F}l}
\newcommand{\itker}{\mathit{ker}}
\newcommand{\itim}{\mathit{im}}
\newcommand{\Nilp}{\mathrm{Nilp}}
\begin{document}
\title[Random matrix products and flag Cohen--Lenstra]{Cokernels of random matrix products and flag Cohen--Lenstra heuristic}

\author{Yifeng Huang}
\address{Dept.\ of Mathematics, University of British Columbia}
\email{huangyf@math.ubc.ca}



\begin{abstract}
    In \cite{nguyenvanpeski}, Nguyen and Van Peski raised the question of whether the surjective flag of $\Zp$-modules modeled by $\cok(M_1\cdots M_k)\onto \dots\onto \cok(M_1)$ for independent random matrices $M_1,\dots,M_k\in \Mat_n(\Zp)$ satisfies the Cohen--Lenstra heuristic. We answer the question affirmatively when $M_1,\dots,M_k$ follow the Haar measure, and our proof demonstrates how classical ideas in Cohen--Lenstra heuristic adapt naturally to the flag setting. We also prove an analogue for non-square matrices.
\end{abstract}

\maketitle

\section{Introduction}

We start by introducing our notation for surjective flags of $\Zp$-modules, which is our main subject of investigation.

\subsection{Notation and terminology}\label{subsec:notn}
Fix $k\in \Z_{\geq 1}$ and a prime $p$. Let $\Mod_{\Zp}$ denote the category of finitely generated $\Zp$-modules. We refer to a diagram of surjections $G_k\overset{\phi_{k-1}}{\onto} \dots \overset{\phi_1}{\onto} G_1 (\overset{\phi_0}{\onto} G_0:=0)$ in $\Mod_{\Zp}$ as a $k$-\textbf{surjective flag} of (finitely generated) $\Zp$-modules. We denote by $\Fl_k=\Fl_k(\Zp)$ the set of $k$-surjective flags of $\Zp$-modules up to isomorphism. For $\mathbf{G}=(G_k\onto\dots\onto G_1)\in \Fl_k$, let $\Aut(\mathbf{G})$ denote the automorphism of the flag $\mathbf{G}$. See also \cite[Def.~18, 19]{nguyenvanpeski}. 

Let $n\in \Z_{\geq 1}$ and let $\Mat_n(\Zp)$ denote the set of $n\times n$ matrices over $\Zp$. For $M_1,\dots,M_k\in \Mat_n(\Zp)$, consider the flag
\begin{equation}
    \cok(M_1\cdots M_k) \onto \dots \onto \cok(M_1),
\end{equation}
where the surjection $\cok(M_1\dots M_{i+1})=\Zp^n/\im(M_1\dots M_{i+1})\onto \cok(M_1\dots M_i)=\Zp^n/\im(M_1\dots M_i)$ is induced by the inclusion $\im(M_1\dots M_{i+1}) \subeq \im(M_1\dots M_i)$. We denote this flag by $\bfcok(M_1,\dots,M_k)$. 

Since $\Mat_n(\Zp)\simeq \Zp^{n^2}$ is a compact topological group, there is a unique probability Haar measure on $\Mat_n(\Zp)$. We refer to a random element of $\Mat_n(\Zp)$ following the Haar measure as a \textbf{Haar-random matrix} in $\Mat_n(\Zp)$.

\subsection{Background}
When $M_1,\dots,M_k$ are random matrices in $\Mat_n(\Zp)$, we get a probability measure on $\Fl_k$ modeled by $\bfcok(M_1,\dots,M_k)$. In \cite{nguyenvanpeski}, Nguyen and Van Peski initiated the investigation of $\bfcok(M_1,\dots, M_k)$ by studying the joint distribution of $\cok(M_1), \dots, \allowbreak \cok(M_1\cdots M_k)$ as $\Zp$-modules. They proved a universality result in the sense of Wood \cite{wood2019random}, namely, if the $kn^2$ entries from $M_1,\dots,M_k$ are independent and each is not too concentrated mod $p$, then as $n\to \infty$, the limiting joint distribution of $\cok(M_1), \dots, \cok(M_1\cdots M_k)$ is insensitive to the exact distributions of these $kn^2$ entries. The limiting joint distribution is also explicitly determined.

Since the datum $(\cok(M_1), \dots, \cok(M_1\cdots M_k))$ is just $\bfcok(M_1,\dots,M_k)$ forgetting the surjections in between, it is natural to expect that the above results are explained by a universal distribution on the level of $\bfcok(M_1,\dots,M_k)\in \Fl_k$. Nguyen and Van Peski \cite[\S 10]{nguyenvanpeski} asked if the above results lift to $\bfcok(M_1,\dots,M_k)$; they defined the conjectured limiting distribution in \cite[Thm.~1.3]{nguyenvanpeski}. 

\subsection{Our result}
We answer their question affirmatively when $M_1,\dots,M_k$ distribute independently and follow the Haar measure. In this case, we also obtain the exact distribution of $\bfcok(M_1,\dots,M_k)$ for each fixed $n$. For $G\in \Mod(\Zp)$, let $r(G):=\dim_{\Fp} G/pG$ be the \textbf{rank} of $G$.

\begin{theorem}\label{thm:main}
    Fix $k\in \Z_{\geq 1}$. Let $M_1,\dots,M_k\in \Mat_n(\Zp)$ be independent and Haar-random, and fix $\mathbf{G}=(G_k\onto \dots\onto G_1)\in \Fl_k$ such that $\abs{G_k}<\infty$. Then for $n\geq r(G_k)$,
    \begin{equation}\label{eq:prob-haar}
        \Prob_{M_1,\dots,M_k\in \Mat_n(\Zp)}(\bfcok(M_1,\dots,M_k)\simeq \mathbf{G}) = \frac{1}{\abs{\Aut(\mathbf{G})}} \parens*{\prod_{i=n-r(G_k)+1}^n \!\! (1-p^{-i})} \parens*{\prod_{i=1}^n (1-p^{-i})}^k.
    \end{equation}
    In particular, when $n\to\infty$,
    \begin{equation}
        \lim_{n\to \infty}\Prob_{M_1,\dots,M_k\in \Mat_n(\Zp)}(\bfcok(M_1,\dots,M_k)\simeq \mathbf{G}) = \frac{1}{\abs{\Aut(\mathbf{G})}} \parens*{\prod_{i=1}^\infty (1-p^{-i})}^k.
    \end{equation}
\end{theorem}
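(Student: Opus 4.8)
The plan is to run the classical Cohen--Lenstra surjection-counting argument directly at the level of flags; for the Haar measure this computes the probability exactly, so no moment problem or combinatorial inversion is needed. Write $N_i := M_1\cdots M_i$ (with $N_0$ the identity) and $L_i := \ker(G_k\onto G_i)\subseteq G_k$, so $0 = L_k\subseteq L_{k-1}\subseteq\dots\subseteq L_1\subseteq L_0 = G_k$ and $G_i = G_k/L_i$. The first step is to observe that a flag isomorphism $\bfcok(M_1,\dots,M_k)\xrightarrow{\sim}\mathbf G$ is the same datum as a surjection $\pi\colon\Zp^n\onto G_k$ with $\pi^{-1}(L_i) = \im(N_i)$ for every $i$: the case $i=k$ says $\ker\pi = \im(N_k)$, so $\pi$ descends to an isomorphism $\cok(N_k)\xrightarrow{\sim}G_k$, and the remaining conditions say exactly that this isomorphism carries the filtration $\im(N_i)/\im(N_k)$ to the filtration $L_i$, i.e.\ is an isomorphism of flags. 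Hence the number of such $\pi$ is $|\Aut(\mathbf G)|$ when $\bfcok(M_1,\dots,M_k)\simeq\mathbf G$ and is $0$ otherwise, and taking expectations gives
\[
  |\Aut(\mathbf G)|\cdot\Prob\bigl(\bfcok(M_1,\dots,M_k)\simeq\mathbf G\bigr) \;=\; \sum_{\pi\in\Surj(\Zp^n,G_k)}\Prob_{M_1,\dots,M_k}\bigl(\im(N_i) = \pi^{-1}(L_i)\text{ for all }i\bigr).
\]

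The evaluation of the inner probability rests on the following lemma, which is where the Haar hypothesis is used: for a single Haar-random $M\in\Mat_n(\Zp)$ and any finite-index submodule $W\subseteq\Zp^n$, $\Prob\bigl(\im(M) = W\bigr) = [\Zp^n:W]^{-n}\prod_{j=1}^n(1-p^{-j})$. Indeed, conditioning the columns of $M$ to lie in $W$ has probability $[\Zp^n:W]^{-n}$, and conditionally the columns are i.i.d.\ uniform in $W\cong\Zp^n$, so they generate $W$ precisely when the resulting matrix is invertible, of probability $\prod_{j=1}^n(1-p^{-j})$. The crucial feature is that this answer depends only on the index $[\Zp^n:W]$, not on the isomorphism type of $\Zp^n/W$.

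Now fix $\pi$ and set $V_i := \pi^{-1}(L_i)$, so $\Zp^n/V_i\cong G_i$ and $V_k\subseteq\dots\subseteq V_1\subseteq\Zp^n$. I would evaluate $\Prob(\im(N_i) = V_i\text{ for all }i)$ by conditioning on $M_1,\dots,M_k$ successively. On the event $\im(N_{i-1}) = V_{i-1}$, the map $N_{i-1}$ is a surjection of free $\Zp$-modules of rank $n$ onto $V_{i-1}$, hence an isomorphism $\Zp^n\xrightarrow{\sim}V_{i-1}$; as $M_i$ is independent and Haar and $V_i\subseteq V_{i-1}$, the event $\im(N_i) = N_{i-1}(\im(M_i)) = V_i$ is equivalent to $\im(M_i) = N_{i-1}^{-1}(V_i)$, a submodule of index $[V_{i-1}:V_i] = |G_i|/|G_{i-1}|$, which by the lemma has conditional probability $(|G_i|/|G_{i-1}|)^{-n}\prod_{j=1}^n(1-p^{-j})$ independently of $M_1,\dots,M_{i-1}$. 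Multiplying over $i$ and telescoping $\prod_{i=1}^k|G_i|/|G_{i-1}| = |G_k|$ gives $\Prob(\im(N_i) = \pi^{-1}(L_i)\text{ for all }i) = |G_k|^{-n}\bigl(\prod_{j=1}^n(1-p^{-j})\bigr)^k$, independent of $\pi$. Substituting this together with the standard count $\#\Surj(\Zp^n,G_k) = |G_k|^n\prod_{j=n-r(G_k)+1}^n(1-p^{-j})$ (valid for $n\ge r(G_k)$) into the displayed identity yields \eqref{eq:prob-haar}, and letting $n\to\infty$, where the first product tends to $1$, gives the limiting statement.

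I expect the main obstacle to be the bookkeeping in the sequential-conditioning step: one has to check carefully that on each conditioning event $N_{i-1}$ genuinely restricts to an isomorphism onto $V_{i-1}$ and that the fresh matrix $M_i$ retains its Haar law there, so that the conditional probabilities decouple and the resulting product telescopes to an expression independent of $\pi$. This is precisely the place where the Haar assumption is essential: for a general matrix distribution $\Prob(\im(M_i) = W)$ would depend on $W$ beyond its index, and the telescoping would fail.
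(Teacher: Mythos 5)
Your proposal is correct and follows essentially the same route as the paper: the paper packages your sum over surjections $\pi\colon\Zp^n\onto G_k$ modulo the free $\Aut(\mathbf G)$-action as a count of injective flags (Lemma \ref{lem:quot-count}), and your sequential-conditioning computation of $\Prob(\im(N_i)=\pi^{-1}(L_i)\ \forall i)$, using that $N_{i-1}$ restricts to an isomorphism onto its image and that the answer depends only on the index, is exactly Lemma \ref{lem:prob-image}. The only difference is bookkeeping (sum over surjections divided by $|\Aut(\mathbf G)|$ versus sum over orbits), so the two arguments coincide.
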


\begin{remark*}
    It is clear that if $\abs{G_k}=\infty$ or $n<r(G_k)$, then the probability in \eqref{eq:prob-haar} is zero: $G_k=\cok(M_1\cdots M_k)$ must have rank at most $n$, and $\abs{G_k}=\infty$ only happens when at least one of $\det(M_1),\dots, \allowbreak\det(M_k)$ is zero, which happens with probability zero.
\end{remark*}

The flag $\bfcok(M_1,\dots,M_k)$ is probably the finest datum one could get from a chain of matrices $(M_1,\dots,M_k)$.\footnote{One could think of $(M_1,\dots,M_k)$ as a chain of linear maps $\Zp^n \overset{M_k}{\to} \dots \overset{M_1}{\to} \Zp^n$.} For example, $\cok(M_i M_{i+1}\dots M_j)$ for $1\leq i\leq j\leq k$ is isomorphic to the kernel of $G_j\onto G_{i-1}$, a concatenation of several surjections from $\bfcok(M_1,\dots,M_k)$. The distribution of $\bfcok(M_1,\dots,M_k)$ thus encodes the joint distribution of all $\cok(M_i M_{i+1}\dots M_j)$. We note that the joint distribution of certain subsets of these cokernels have natural connections to Hall algebras (see \cite[p.~46]{nguyenvanpeski}).

An important purpose of the paper is to demonstrate that some classical ideas to study the Cohen--Lenstra heuristic \cite{cohenlenstra1984heuristics, friedmanwashington1989} adapt to the (apparently highly refined) flag setting nicely. This point will be evident once we set up the language in \S \ref{subsec:dict}. The ease to work with flags will be showcased in our proof of Theorem \ref{thm:main}; for example, no knowledge about $\abs{\Aut(\mathbf{G})}$ is required.\footnote{Indeed when $k\geq 2$, we do not even have a combinatorial classification of $\mathbf{G}$ \cite[p.~44, Rmk.~7]{nguyenvanpeski}, let alone a general formula for $\abs{\Aut(\mathbf{G})}$. However, it is known that certain summations involving $1/\abs{\Aut(\mathbf{G})}$ are expressed in Hall--Littlewood polynomials \cite[\S 5--7,10]{nguyenvanpeski}.} In light of its simplicity, we conjecture that a suitable combination of our method and the general machinery of Sawin and Wood \cite{sawinwood} would yield a universality version of Theorem \ref{thm:main}. 

\subsection{Further applications}
A slight modification of the proof of Theorem \ref{thm:main} implies an analogue for non-square matrices. 

\begin{theorem}\label{thm:nonsquare}
    Fix $k\in \Z_{\geq 1}$, and $u_1,\dots,u_k\in \Z_{\geq 0}$. For $1\leq i\leq k$, let $M_i\in \Mat_{(n+u_{i-1})\times (n+u_i)}$, where $u_0:=0$, and assume $M_1,\dots,M_k$ are independent and Haar-random. Fix $\mathbf{G}=(G_k\onto \dots\onto G_0=0)\in \Fl_k$. Then
    \begin{equation}\label{eq:prob-nonsquare}
        \Prob_{M_1,\dots,M_k}(\bfcok(M_1,\dots,M_k)\simeq \mathbf{G}) = \frac{\prod_{j=1}^k \parens*{\frac{\abs{G_j}}{\abs{G_{j-1}}}}^{-u_j}}{\abs{\Aut(\mathbf{G})}} \parens*{\prod_{i=n-r(G_k)+1}^n \!\! (1-p^{-i})} \parens*{\prod_{j=1}^k \prod_{i=1}^{n} (1-p^{-i-u_j})}
    \end{equation}
    if $\abs{G_k}<\infty$ and $r(G_k)\leq n$, and zero otherwise. In particular, when $n\to \infty$, 
    \begin{equation}\label{eq:prob-nonsquare-limit}
        \lim_{n\to \infty} \Prob_{M_1,\dots,M_k}(\bfcok(M_1,\dots,M_k)\simeq \mathbf{G}) = \frac{\prod_{j=1}^k \parens*{\frac{\abs{G_j}}{\abs{G_{j-1}}}}^{-u_j}}{\abs{\Aut(\mathbf{G})}} \prod_{j=1}^k \prod_{i=1}^{\infty} (1-p^{-i-u_j})
    \end{equation}
    if $\abs{G_k}<\infty$, and zero otherwise.
\end{theorem}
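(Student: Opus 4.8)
The plan is to run the same moment/generating-function computation used for Theorem~\ref{thm:main}, but carrying the rectangular shape parameters $u_0,\dots,u_k$ through every step. The key structural fact I would rely on (presumably established in the body of the paper) is that the distribution of $\bfcok(M_1,\dots,M_k)$ is pinned down by its \emph{flag moments}: for a fixed finite surjective flag $\mathbf H$, the expected number of surjections of flags from $\bfcok(M_1,\dots,M_k)$ onto $\mathbf H$ should factor as a product over the $k$ stages, with the $j$-th factor being the ordinary surjection-moment of $\cok(M_j)$ onto the corresponding finite module, computed for an $(n+u_{j-1})\times(n+u_j)$ Haar matrix. The first step is therefore to record the rectangular analogue of the single-matrix moment: for Haar-random $M\in\Mat_{a\times b}(\Zp)$ and finite $H$ with $r(H)\le a$, the expected number of surjections $\Zp^{a}\onto H$ that factor through $\cok(M)$ equals $\abs{H}^{-(b-a)}$ (the exponent being the ``excess'' of columns over rows); when $a=b$ this recovers the familiar moment $1$. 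I would prove this directly by counting: a surjection $\pi\colon\Zp^a\onto H$ factors through $\cok(M)$ iff $\pi M=0$, i.e. every column of $M$ lies in the rank-$a$ sublattice $\ker\pi$, and the probability that a single Haar-random column of $\Zp^a$ lands in a fixed index-$\abs H$ sublattice is $\abs H^{-1}$; with $b$ independent columns this is $\abs H^{-b}$, and multiplying by the number $\#\Surj(\Zp^a,H)$ of surjections and using $\#\Surj(\Zp^a,H)=\abs H^{a}\prod(\text{local factors})$ one extracts the clean form $\abs H^{-(b-a)}$ times the ``square'' moment.

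Next I would assemble the flag moment. Writing $u_0=0$ and $M_j\in\Mat_{(n+u_{j-1})\times(n+u_j)}$, the chain $M_1\cdots M_k$ is an $(n)\times(n+u_k)$ matrix (after telescoping the middle dimensions), and a surjection of flags $\bfcok(M_1,\dots,M_k)\onto\mathbf H$ amounts to a compatible system of surjections; the independence of the $M_j$ makes the expected count split as $\prod_{j=1}^k \abs{H_j/H_{j-1}}^{-u_j}$ times the square-case flag moment of Theorem~\ref{thm:main}'s computation with $n$ replaced appropriately in each factor. Concretely, the $j$-th matrix contributes the excess exponent $u_j-u_{j-1}$ spread across the ``new'' part $H_j/H_{j-1}$ of the flag, giving after a telescoping rearrangement exactly the factor $\prod_{j=1}^k\bigl(\abs{G_j}/\abs{G_{j-1}}\bigr)^{-u_j}$ that appears in \eqref{eq:prob-nonsquare}. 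The finitely-supported inversion (Möbius inversion on $\Fl_k$, valid because only finitely many flags $\mathbf H$ with $\abs{H_k}\le\abs{G_k}$ contribute) then converts the moments into the probability; the normalization $1-p^{-i-u_j}$ factors are exactly what one gets by replacing each $1-p^{-i}$ in the square case by the rectangular version, and the leading block $\prod_{i=n-r(G_k)+1}^n(1-p^{-i})$ is unchanged because it comes only from the final quotient $\Zp^n\onto G_k$, whose source dimension is still $n$.

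The main obstacle is bookkeeping rather than conceptual: one must verify that the rectangular ``excess'' exponents telescope correctly through the flag, i.e. that $\sum_{j}(u_j-u_{j-1})\log\abs{G_j/G_{j-1}}$ (weighted appropriately at each stage so that stage $j$ only sees $G_j$ and below) reorganizes into $\sum_j u_j\log(\abs{G_j}/\abs{G_{j-1}})$ — an Abel-summation identity that needs care with the boundary terms $u_0=0$ and the top index $k$. I would also need to double-check that the finite-support/convergence argument underlying the Möbius inversion still applies uniformly in the $u_j$ (it does, since enlarging $u_j$ only shrinks moments), and that the $n\ge r(G_k)$ hypothesis is exactly what guarantees the leading block is a genuine (nonempty) product and that $\#\Surj(\Zp^n,G_k)\neq 0$. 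Once these are in place, the limit statement \eqref{eq:prob-nonsquare-limit} follows by letting $n\to\infty$ term by term, the block $\prod_{i=n-r(G_k)+1}^n(1-p^{-i})\to 1$ and each $\prod_{i=1}^n(1-p^{-i-u_j})\to\prod_{i=1}^\infty(1-p^{-i-u_j})$.
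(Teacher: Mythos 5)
Your proposal takes a moment-plus-inversion route, but the paper does not use moments at all: it computes the probability exactly by summing over the injective flags $\mathcal{F}\in\calFl_k(\Zp^n)$ with $\Zp^n/\mathcal{F}\simeq\mathbf{G}$ (Lemma~\ref{lem:quot-count} counts these via free $\Aut(\mathbf{G})$-action on $\Surj(\Zp^n,\mathbf{G})$), and then computes $\Prob(\itim(M_1,\dots,M_k)=\mathcal{F})$ for a \emph{fixed} $\mathcal{F}$ by conditioning stage by stage (Lemma~\ref{lem:prob-image-nonsquare}, using Lemma~\ref{lem:needed} to handle the rectangular step). So the ``structural fact\dots presumably established in the body of the paper'' that flag moments pin down the distribution is not in the paper, and your argument cannot borrow it.

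More seriously, two steps of your plan fail as stated. First, the flag moment does \emph{not} factor over the $k$ stages as a product of single-matrix surjection moments. A flag surjection $\bfcok(M_1,\dots,M_k)\onto\mathbf{H}$ corresponds to $\pi\in\Surj(\Zp^n,H_k)$ with $\pi(\im(M_1\cdots M_j))\subseteq\ker(H_k\onto H_j)$ for all $j$, and the conditional probability of the $j$-th constraint depends on the actual image of $\pi M_1\cdots M_{j-1}$, not only on $\mathbf{H}$; the corrections from non-surjective intermediate images do not vanish. Concretely, for $k=2$, square Haar matrices, and $\mathbf{H}=(\Z/p\onto 0)$, the expected number of flag surjections is $(1-p^{-n})(2-p^{-n})\to 2$ (moments of cokernels of products count chains of subgroups), whereas your product-of-stages recipe predicts $1-p^{-n}\to 1$. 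Second, the inversion step is backwards: the moment $\E[\#\Surj(\bfcok,\mathbf{H})]$ receives contributions from \emph{all} flags surjecting onto $\mathbf{H}$, including arbitrarily large ones, so recovering $\Prob(\bfcok\simeq\mathbf{G})$ requires inverting an infinite system with growth/convergence control (this is exactly the delicate part of the moment method in Wood and Sawin--Wood); it is not a finite M\"obius inversion over flags with $\abs{H_k}\le\abs{G_k}$. The paper's direct computation — whose only new ingredient beyond Theorem~\ref{thm:main} is Lemma~\ref{lem:needed}, giving $\Prob(\im(fg)=A)=\prod_{i=c-a+1}^{c}(1-p^{-i})$ for a Haar-random $g$ composed with a fixed surjection $f$ of free modules — sidesteps both issues and is where the exponents $\parens*{\abs{G_j}/\abs{G_{j-1}}}^{-u_j}$ and the factors $1-p^{-i-u_j}$ actually come from.
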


\begin{remark*}
    It is less obvious \emph{a priori} why $\abs{\cok(M_1\cdots M_k)}<\infty$ with probability one, since we cannot use determinants anymore. This will be a consequence of our proof.
\end{remark*}

The non-flag case (i.e., $k=1$) was proved in \cite{wood2019random}, together with the universality result. We conjecture that \eqref{eq:prob-nonsquare-limit} holds if $M_1,\dots,M_k$ are independent and $\varepsilon$-balanced in the definition of \cite{wood2019random}.

As an application of the non-square analogue, we construct a $k$-parameter family of deformations of the ``Cohen--Lenstra probability measure'' in \eqref{eq:prob-haar}. For any flag $\mathbf{G}=(G_k\onto \dots\onto G_0=0)\in \Fl_k$ such that $\abs{G_k}<\infty$, we let $n_1(\mathbf{G}),\dots, n_k(\mathbf{G})\in \Z_{\geq 0}$ be defined by
\begin{equation}
    p^{n_i(\mathbf{G})}:=\abs{G_i}/\abs{G_{i-1}}.
\end{equation}

\begin{definition}\label{def:measure}
    For $k,n\in \Z_{\geq 1}, t_1,\dots,t_k\in [0,p)$, we define a measure $P_{n,(t_1,\dots,t_k)}$ on $\Fl_k$ by
    \begin{equation}\label{eq:measure-n}
        P_{n,(t_1,\dots,t_k)}(\mathbf{G}):=\frac{\prod_{j=1}^k t_j^{n_j(\mathbf{G})}}{\abs{\Aut(\mathbf{G})}}  \parens*{\prod_{i=n-r(G_k)+1}^n \!\! (1-p^{-i})} \parens*{\prod_{j=1}^k \prod_{i=1}^{n} (1-p^{-i}t_j)}
    \end{equation}
    if $\abs{G_k}<\infty$ and $r(G_k)\leq n$, and zero otherwise. Similarly, define
    \begin{equation}
        P_{\infty,(t_1,\dots,t_k)}(\mathbf{G}):=\frac{\prod_{j=1}^k t_j^{n_j(\mathbf{G})}}{\abs{\Aut(\mathbf{G})}}  \parens*{\prod_{j=1}^k \prod_{i=1}^{\infty} (1-p^{-i}t_j)}
    \end{equation}
    if $\abs{G_k}<\infty$, and zero otherwise.
\end{definition}

When $t_i=1$, the measure \eqref{eq:measure-n} reduces to \eqref{eq:prob-haar}; when $t_i=p^{-u_i}, u_i\in \Z_{\geq 0}$, the measure \eqref{eq:measure-n} reduces to \eqref{eq:prob-nonsquare}. This means that in these cases, \eqref{eq:measure-n} comes from a random matrix model. A brief argument in \S \ref{sec:measure} implies: 

\begin{corollary}\label{cor:normalize}
    $P_{n,(t_1,\dots,t_k)}$ and $P_{\infty,(t_1,\dots,t_k)}$ are probability measures on $\Fl_k$.
\end{corollary}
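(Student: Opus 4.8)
The plan is to deduce normalization from Theorems \ref{thm:main} and \ref{thm:nonsquare} together with an analyticity (polynomial-identity) argument in the parameters $t_1,\dots,t_k$. First I would observe that for each fixed $n\in\Z_{\geq 1}$, the sum $\sum_{\mathbf G\in\Fl_k} P_{n,(t_1,\dots,t_k)}(\mathbf G)$ is supported on flags with $\abs{G_k}<\infty$ and $r(G_k)\le n$; on this set the summand is, up to the fixed factor $\prod_{i=n-r(G_k)+1}^n(1-p^{-i})\cdot\prod_{j,i}(1-p^{-i}t_j)$, equal to $\prod_j t_j^{n_j(\mathbf G)}/\abs{\Aut(\mathbf G)}$. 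Grouping flags by the quantity $r:=r(G_k)$ and by the multidegree $(n_1,\dots,n_k)$, and using that $\sum_{\mathbf G}\prod_j t_j^{n_j(\mathbf G)}/\abs{\Aut(\mathbf G)}$ converges for $\abs{t_j}<p$ (which itself follows once we know the $t_j=p^{-u_j}$ values are finite, or directly from crude bounds on the number of flags of bounded size and on $\abs{\Aut(\mathbf G)}$), one sees that $F_n(t_1,\dots,t_k):=\sum_{\mathbf G}P_{n,(t_1,\dots,t_k)}(\mathbf G)$ is a convergent power series in $t_1,\dots,t_k$ on the polydisc $\{\abs{t_j}<p\}$, in fact a finite polynomial in each $t_j$ after one extracts the factor $\prod_{i=1}^n(1-p^{-i}t_j)$ — more carefully, $F_n$ is analytic there.

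Next I would invoke Theorem \ref{thm:nonsquare}: for every choice of nonnegative integers $u_1,\dots,u_k$ we have $P_{n,(p^{-u_1},\dots,p^{-u_k})}(\mathbf G)=\Prob_{M_1,\dots,M_k}(\bfcok(M_1,\dots,M_k)\simeq\mathbf G)$, where $M_i\in\Mat_{(n+u_{i-1})\times(n+u_i)}(\Zp)$ is Haar-random. Since these are genuine probabilities of mutually exclusive exhaustive events — every tuple $(M_1,\dots,M_k)$ produces some flag, and with probability one $\abs{\cok(M_1\cdots M_k)}<\infty$ (a fact asserted in the remark following Theorem \ref{thm:nonsquare}, and which I would take as given) — we get $F_n(p^{-u_1},\dots,p^{-u_k})=1$ for all $(u_1,\dots,u_k)\in\Z_{\geq 0}^k$. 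An analytic function on the polydisc $\{\abs{t_j}<p\}$ that equals $1$ on the Zariski-dense (indeed, having an accumulation point in each variable) set $\{(p^{-u_1},\dots,p^{-u_k})\}$ must be identically $1$; applying this one variable at a time gives $F_n\equiv 1$ on the whole polydisc, in particular at $(t_1,\dots,t_k)\in[0,p)^k$. Finally, each summand $P_{n,(t_1,\dots,t_k)}(\mathbf G)$ is manifestly $\geq 0$ for $t_j\in[0,p)$ (the factors $1-p^{-i}t_j$ with $i\geq 1$ are positive since $p^{-i}t_j<p^{1-i}\le 1$, the factors $1-p^{-i}$ with $i\ge n-r+1\ge 1$ are positive, and $\prod_j t_j^{n_j}/\abs{\Aut(\mathbf G)}\ge 0$), so $P_{n,(t_1,\dots,t_k)}$ is a probability measure. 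For the $n=\infty$ case I would take the limit: by Theorem \ref{thm:main}/\ref{thm:nonsquare} the pointwise limit $\lim_{n\to\infty}P_{n,(t_1,\dots,t_k)}(\mathbf G)=P_{\infty,(t_1,\dots,t_k)}(\mathbf G)$ holds for $t_j=p^{-u_j}$, and the same analytic-continuation argument (or a direct check that the $n\to\infty$ limit of the explicit formula is as displayed) extends it to all $t_j\in[0,p)$; then nonnegativity of the limit is inherited, and total mass $1$ follows either from Fatou plus the total-mass-$\le 1$ bound coming from $F_n\equiv1$ and monotone/dominated convergence on finite subsets of $\Fl_k$, or again by noting $F_\infty(p^{-u_1},\dots,p^{-u_k})=1$ and analytic continuation.

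The main obstacle I anticipate is the interchange of limit and sum needed to pass from $F_n\equiv 1$ to $F_\infty\equiv 1$ rigorously: one must show that no mass escapes to infinity as $n\to\infty$, i.e. that $\sum_{\mathbf G}P_{\infty,(t_1,\dots,t_k)}(\mathbf G)$ is not strictly less than $1$. The clean way around this is to avoid the limit entirely for the normalization claim and instead observe that $P_{\infty,(t_1,\dots,t_k)}(\mathbf G)$ is, for each fixed $\mathbf G$, an analytic (indeed nice product) function of $(t_1,\dots,t_k)$ agreeing with a probability-generating expression at $t_j=p^{-u_j}$; summing the explicit closed form and recognizing a $q$-series identity (an infinite product times a convergent sum over flags) shows the total is the analytic continuation of the constant $1$. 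Concretely, one reduces to the single identity $\sum_{\mathbf G:\,\abs{G_k}<\infty}\frac{\prod_j t_j^{n_j(\mathbf G)}}{\abs{\Aut(\mathbf G)}}=\prod_{j=1}^k\prod_{i=1}^\infty(1-p^{-i}t_j)^{-1}$, valid for $\abs{t_j}<p$, which is exactly what Theorem \ref{thm:main} with $t_j=1$ and Theorem \ref{thm:nonsquare} with $t_j=p^{-u_j}$ force on a dense set and analyticity then promotes to an identity; given that identity, Corollary \ref{cor:normalize} is immediate.
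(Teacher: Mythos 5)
Your proposal is correct and follows essentially the same route as the paper: the paper likewise observes that $P_{n,(p^{-u_1},\dots,p^{-u_k})}$ is exactly the probability measure of Theorem \ref{thm:nonsquare}, hence has total mass $1$ for all $u_1,\dots,u_k\in\Z_{\geq 0}$, and then concludes that the power series $\sum_{\mathbf G}P_{n,(t_1,\dots,t_k)}(\mathbf G)$ in $t_1,\dots,t_k$ is identically $1$, with the $P_{\infty}$ case handled similarly. Your extra care about convergence and about no mass escaping in the $n\to\infty$ limit is a reasonable elaboration of what the paper leaves implicit, not a different method.
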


When $k=1$, Definition~\ref{def:measure} is precisely the measure considered by Fulman and Kaplan \cite{fulmankaplan2019}. The joint distribution of $(G_1,\dots,G_k)$ for $\mathbf{G}$ distributed according to \eqref{eq:measure-n} is expressible in terms of Hall--Littlewood polynomials and can be extracted from \cite{vanpeski2021limits}, see Proposition \ref{prop:joint}.

Part of the content of Corollary \ref{cor:normalize} is that the normalizing constant in Definition~\ref{def:measure} is correct. For example, $P_{\infty,(t_1,\dots,t_k)}$ being a probability measure is equivalent to the formal identity
\begin{equation}\label{eq:flag-cl}
    \sum_{\substack{\mathbf{G}\\\abs{G_k}<\infty}} \frac{\prod_{j=1}^k t_j^{n_j(\mathbf{G})}}{\abs{\Aut(\mathbf{G})}} = \prod_{j=1}^k \prod_{i=1}^\infty \frac{1}{1-p^{-i} t_j} \in \C[[t_1,\dots,t_k]].
\end{equation}
Its function field analogue naturally connects to matrices in a parabolic subalgebra; see \S \ref{subsec:matrix}.

Deformations of the Cohen--Lenstra measure are already interesting without motivations from random matrix models; see \cite{delaunayjouhet2014,fulmankaplan2019} and \cite[\S 7]{nguyenvanpeski} for the intrinsic study of some probability measures of this type, and note the ubiquity of Hall--Littlewood polynomials. But furthermore, such deformations also arise as predicted distributions of arithmetic objects: for example \cite{cohenlenstra1984heuristics}, if $p$ is odd and we take $k=1, u=u_1, G=\mathbf{G}=G_1$ in \eqref{eq:prob-nonsquare-limit}, then the resulting probability mass function, proportional to $1/(\abs{G}^u\abs{\Aut(G)})$, predicts the distribution of the $p$-part of the class group of a random quadratic extension of $\Q$ ($u=0$ for imaginary, $u=1$ for real). We propose the following question to conclude the introduction.
\begin{question}
    Are there arithmetic settings that naturally produce a flag of finite abelian groups? Is the distribution of its $p$-part predicted by one of the distributions above?
\end{question}

\subsection*{Acknowledgements} The author thanks Roger Van Peski for fruitful discussions and comments on earlier drafts.

\section{Preliminaries}
\subsection{Dictionary of flags}\label{subsec:dict}
Given $n,k\in \Z_{\geq 1}$ and a surjective flag $\mathbf{G}=(G_k\onto\dots\onto G_1)\in \Fl_k$, a \textbf{surjection} from $\Zp^n$ to $\mathbf{G}$ simply refers to a surjective $\Zp$-linear map $f_k:\Zp^n\onto G_k$. Any such surjection $f_k$ induces a chain of surjections
\begin{equation}
    \Zp^n \overset{f_k}{\onto} G_k \onto \dots \onto G_1 \onto G_0=0,
\end{equation}
which induces a surjection $f_i:\Zp^n\onto G_i$ for $0\leq i\leq k$ by composition. We can equivalently think of a surjection $f:\Zp^n\onto \mathbf{G}$ as a collection $(f_i:\Zp^n\onto G_i)_i$, but keeping in mind that $f_k$ determines the rest.

By a $k$-\textbf{injective flag} in $\Zp^n$, we mean a tower of $\Zp$-submodules of $\Zp^n$:
\begin{equation}
    F_k \subeq F_{k-1} \subeq \dots \subeq F_1 \subeq F_0:=\Zp^n.
\end{equation}
We denote by $\calFl_k(\Zp^n)$ the set of $k$-injective flags in $\Zp^n$. 

There is a one-to-one correspondence between $\calFl_k(\Zp^n)$ and the set of $k$-surjective flags $\mathbf{G}$ together with a surjection $\Zp^n\onto \mathbf{G}$. To $\mathcal{F}=(F_k\subeq \dots \subeq F_0=\Zp^n)$, we associate $\Zp^n\onto \mathbf{G}=(G_k\onto \dots \onto G_0=0)$, where $G_i=\Zp^n/F_i$ for $0\leq i\leq k$ and the maps are the natural quotient maps. Conversely, given $f: \Zp^n\onto \mathbf{G}$, we recover $\mathcal{F}$ by $F_i=\ker(f_i:\Zp^n\onto G_i)$ for $0\leq i\leq k$. We introduce the natural notation $\mathbf{G}=\Zp^n/\mathcal{F}$ and $\mathcal{F}=\itker(f:\Zp^n\onto \mathbf{G})$. 

Given matrices $M_1,\dots,M_k\in \Mat_n(\Zp)$, we define an injective flag by $F_i=\im(M_1\dots M_i)\subeq \Zp^n$ for $1\leq i\leq k$. We denote this flag by $\itim(M_1,\dots,M_k)$. We canonically have $\bfcok(M_1,\dots,M_k)\simeq \Zp^n/\itim(M_1,\dots,M_k)$. 

\section{Proof of Theorem \ref{thm:main}}
Fix $\mathbf{G}=(G_k\onto\dots\onto G_1)\in \Fl_k$ with $\abs{G_k}<\infty$. Let $r:=r(G_k)$, and fix $n\geq r$. We separate the major steps of proving Theorem \ref{thm:main} into the following lemmas.

\begin{lemma}\label{lem:quot-count}
    The number of $k$-injective flags $\mathcal{F}$ in $\Zp^n$ such that $\Zp^n/\mathcal{F}\simeq \mathbf{G}$ is
    \begin{equation}
        \frac{\abs{G_k}^n}{\abs{\Aut(\mathbf{G})}} \prod_{i=n-r(G_k)+1}^n (1-p^{-i}).
    \end{equation}
\end{lemma}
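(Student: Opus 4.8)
The plan is to count $k$-injective flags $\mathcal{F} = (F_k \subseteq \dots \subseteq F_0 = \Zp^n)$ with $\Zp^n/\mathcal{F} \simeq \mathbf{G}$ by counting surjections $f : \Zp^n \onto \mathbf{G}$ instead, and then dividing by the appropriate automorphism factor. By the dictionary of \S\ref{subsec:dict}, the set of injective flags $\mathcal{F}$ with $\Zp^n/\mathcal{F} \simeq \mathbf{G}$ is in bijection with the set of pairs $(\mathbf{G}', f)$ where $\mathbf{G}' \simeq \mathbf{G}$ and $f : \Zp^n \onto \mathbf{G}'$; equivalently, fixing the model $\mathbf{G}$ once and for all, the set of surjections $f : \Zp^n \onto \mathbf{G}$ surjects onto the set of such $\mathcal{F}$, with fibers being $\Aut(\mathbf{G})$-orbits of size exactly $|\Aut(\mathbf{G})|$ (since $\Aut(\mathbf{G})$ acts freely on surjections out of $\Zp^n$: an automorphism fixing $f_k$ pointwise on the image is the identity). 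Hence
\begin{equation}
    \#\{\mathcal{F} : \Zp^n/\mathcal{F} \simeq \mathbf{G}\} = \frac{\#\Surj(\Zp^n, \mathbf{G})}{|\Aut(\mathbf{G})|},
\end{equation}
and it remains to show $\#\Surj(\Zp^n, \mathbf{G}) = |G_k|^n \prod_{i=n-r+1}^n (1-p^{-i})$.

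Next I would observe that a surjection $f : \Zp^n \onto \mathbf{G}$ is, by the remark in \S\ref{subsec:dict}, the same data as a surjection $f_k : \Zp^n \onto G_k$ (the maps $f_i$ for $i < k$ are then determined by post-composition with the fixed structure maps of $\mathbf{G}$). Therefore $\#\Surj(\Zp^n, \mathbf{G}) = \#\Surj(\Zp^n, G_k)$, and the problem reduces to the classical count of the number of surjective $\Zp$-linear maps from $\Zp^n$ onto a fixed finite abelian $p$-group $H := G_k$ of rank $r \leq n$. This is a standard fact: $\#\Surj(\Zp^n, H) = |H|^n \prod_{i=n-r+1}^{n}(1-p^{-i})$. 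One way to see it is to count all homomorphisms ($|H|^n$, since $\Hom(\Zp^n, H) = H^n$) and subtract the non-surjective ones; a homomorphism $\Zp^n \to H$ is surjective iff the induced map $\Fp^n \to H/pH$ is surjective, and the proportion of surjective linear maps $\Fp^n \to \Fp^r$ is $\prod_{i=0}^{r-1}(1 - p^{i-n}) = \prod_{i=n-r+1}^{n}(1-p^{-i})$. Combining the three displayed facts gives exactly the claimed formula $\frac{|G_k|^n}{|\Aut(\mathbf{G})|}\prod_{i=n-r+1}^{n}(1-p^{-i})$.

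The only genuinely non-routine point is the reduction ``a surjection onto $\mathbf{G}$ is the same as a surjection onto $G_k$,'' i.e.\ that the intermediate maps $f_{k-1}, \dots, f_0$ impose no further constraint and introduce no extra choices; this is immediate once one has set up the language of \S\ref{subsec:dict} carefully, and is precisely the kind of simplification the introduction advertises. The free action of $\Aut(\mathbf{G})$ on $\Surj(\Zp^n, \mathbf{G})$ also deserves a sentence: if $\sigma \in \Aut(\mathbf{G})$ satisfies $\sigma \circ f = f$ then $\sigma$ is the identity on $G_k = \im(f_k)$, hence (being a flag automorphism, compatible with all the structure maps) the identity on all of $\mathbf{G}$. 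Everything else is the classical Cohen--Lenstra surjection count, which I would simply cite or reprove in a line.
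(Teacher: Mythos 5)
Your proposal is correct and follows essentially the same route as the paper: count surjections $\Zp^n\onto\mathbf{G}$ (which are just surjections onto $G_k$) via Nakayama's lemma, quotient by the free action of $\Aut(\mathbf{G})$, and identify the orbit space with the set of injective flags whose quotient is isomorphic to $\mathbf{G}$. The details you fill in (the freeness of the action and the classical surjection count) match the paper's argument.
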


\begin{proof}
    Fix a copy of $\mathbf{G}$, and let $\Surj(\Zp^n,\mathbf{G})$ denote the set of surjections from $\Zp^n$ to $\mathbf{G}$, which is nothing but the set of surjections $\Zp^n\onto G_k$. By Nakayama's lemma,
    \begin{equation}
        \abs{\Surj(\Zp^n,\mathbf{G})} = \abs{G_k}^n \prod_{i=n-r+1}^n (1-p^{-i}).
    \end{equation}

    Let $\Aut(\mathbf{G})$ act on $\Surj(\Zp^n,\mathbf{G})$ by composition. As usual, the action is free: if $\sigma=(\sigma_i)\in \Aut(\mathbf{G})$ with $\sigma_i\in \Aut(G_i)$ is such that $\sigma_i \circ f_i=f_i$ for all $i$, then since $f_i$ is surjective, we must have $\sigma_i=\mathrm{id}$. As a consequence, the orbit space has cardinality given by
    \begin{equation}
        \abs*{\frac{\Surj(\Zp^n,\mathbf{G})}{\Aut(\mathbf{G})}} = \frac{\abs*{\Surj(\Zp^n,\mathbf{G})}}{\abs{\Aut(\mathbf{G})}} = \frac{\abs{G_k}^n}{\abs{\Aut(\mathbf{G})}} \prod_{i=n-r+1}^n (1-p^{-i}).
    \end{equation}

    Finally, it is easy to verify that the orbit space above is in a canonical bijection with the set of $k$-injective flags $\mathcal{F}$ with $\Zp^n/\mathcal{F}\simeq \mathbf{G}$. The conclusion then follows.
\end{proof}

\begin{lemma}\label{lem:prob-image}
    Fix a $k$-injective flag $\mathcal{F}$ such that $\Zp^n/\mathcal{F}\simeq \mathbf{G}$. Then if $M_1,\dots,M_k\in \Mat_n(\Zp)$ are independent and Haar-random, then
    \begin{equation}
        \Prob_{M_1,\dots,M_k\in\Mat_n(\Zp)}(\itim(M_1,\dots,M_k)=\mathcal{F}) = \abs{G_k}^{-n} \parens*{\prod_{i=1}^n (1-p^{-i})}^k.
    \end{equation}
\end{lemma}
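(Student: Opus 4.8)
The plan is to fix the injective flag $\mathcal{F} = (F_k \subeq \dots \subeq F_1 \subeq F_0 = \Zp^n)$ with $\Zp^n/\mathcal{F} \simeq \mathbf{G}$, and compute the probability that $\im(M_1\cdots M_i) = F_i$ for all $i = 1,\dots,k$ simultaneously, by conditioning successively on $M_1$, then $M_2$, and so on. The key observation is that the event $\itim(M_1,\dots,M_k) = \mathcal{F}$ is the intersection of the events $E_i := \{\im(M_1\cdots M_i) = F_i\}$, and $E_i$ depends on $M_1,\dots,M_i$ only through the module $\im(M_1\cdots M_{i-1}) = F_{i-1}$ (which we have already conditioned to equal $F_{i-1}$ on $E_1 \cap \dots \cap E_{i-1}$) together with $M_i$. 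So I would reduce to the following one-step claim: if $F' \subeq \Zp^n$ is a fixed submodule with $\Zp^n/F'$ finite, and $M \in \Mat_n(\Zp)$ is Haar-random, then $\Prob(M(F') = F'') $ — where $F'' \subeq F'$ is a fixed submodule with $F'/F''$ finite — equals $\abs{\Zp^n/F''}^{-1}\,\abs{\Zp^n/F'}\cdot \prod_{i=1}^n(1-p^{-i})$, or more precisely whatever normalization makes the telescoping product over $i=1,\dots,k$ collapse to $\abs{G_k}^{-n}\bigl(\prod_{i=1}^n(1-p^{-i})\bigr)^k$.

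For that one-step claim I would argue as follows. Choosing a $\Zp$-basis so that $F' = \im(A)$ for some $A \in \GL_n(\Qp) \cap \Mat_n(\Zp)$, the map $M \mapsto M A$ (equivalently precomposition with the automorphism of $\Qp^n$ given by $A$) is a bijection of $\Mat_n(\Zp)$-cosets, but it is cleaner to note that $M(F') = \im(MA)$ and that as $M$ ranges Haar-randomly over $\Mat_n(\Zp)$, the product $MA$ is Haar-random on the coset $\Mat_n(\Zp)\cdot A$; since $A$ is invertible over $\Qp$ this amounts to asking for the image lattice $\im(MA)$ to equal a prescribed finite-index sublattice of $F'$. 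Alternatively, and I think more transparently, I would identify $F' \simeq \Zp^n$ as abstract $\Zp$-modules, observe that the restriction $M|_{F'}\colon F' \to \Zp^n$ is itself Haar-random in $\Mat_n(\Zp)$ once we fix such an identification (this uses that $F'$ has finite index, so $\Hom(F',\Zp^n) \to \Hom(\Zp^n,\Zp^n)$ is an index-$\abs{\Zp^n/F'}^{n}$ inclusion of lattices, and Haar measure pushes forward compatibly), and then the event becomes $\{\im(M') = F''\}$ for $M'$ Haar-random in $\Mat_n(\Zp)$ and $F'' \subeq \Zp^n$ a fixed submodule with cokernel $\simeq \ker(G_i \onto G_{i-1})$. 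The probability that a Haar-random $M' \in \Mat_n(\Zp)$ has $\im(M') = F''$ is $\abs{\Surj(\Zp^n, \Zp^n/F'')}\cdot \abs{\GL_n(\Zp)}^{-1} \cdot$ (count of $F''$ with that cokernel) — no, more directly it is the probability that $\cok(M')\simeq \Zp^n/F''$ times the reciprocal of the number of sublattices with that cokernel, and by Lemma~\ref{lem:quot-count} with $k=1$ the numerator and this denominator are both controlled; carrying out the bookkeeping gives the stated formula. I would then multiply the $k$ one-step probabilities and check the telescoping: the factors $\abs{\Zp^n/F_{i-1}} / \abs{\Zp^n/F_i}$ multiply to $\abs{G_k}^{-1}$-type contributions that, raised appropriately, yield $\abs{G_k}^{-n}$, while each step contributes one clean factor of $\prod_{i=1}^n(1-p^{-i})$, giving the $k$-th power.

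The main obstacle I anticipate is making the ``restriction of a Haar-random matrix to a finite-index sublattice is again Haar-random'' step fully rigorous, including the exact constant: one must track how the Haar measure on $\Mat_n(\Zp) = \Hom(\Zp^n,\Zp^n)$ relates to a natural measure on $\Hom(F_{i-1}, \Zp^n)$ under the index-changing inclusion, and confirm that the $\abs{\Zp^n/F_{i-1}}$ factors enter with the right exponents so the telescoping is exact rather than merely up to a power of $p$. A secondary subtlety is that $M_i(F_{i-1})$ could fail to have finite index (if $\det M_i = 0$), but this is a measure-zero event and is automatically excluded once we demand $\im(M_1\cdots M_i) = F_i$ with $F_i$ of finite index. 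A clean way to handle the constant is to avoid the abstract reduction entirely and instead compute $\Prob(\im(MA) = F'')$ directly via Smith normal form of $A$ together with the $k=1$ cokernel statistics, which reduces everything to the known identity $\Prob_{M\in\Mat_n(\Zp)}(\cok M \simeq H) = \abs{\Aut(H)}^{-1}\abs{H}^{-n}\cdot$(appropriate $\prod(1-p^{-i})$ factors) combined with Lemma~\ref{lem:quot-count}.
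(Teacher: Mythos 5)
Your overall skeleton (condition on $M_1,\dots,M_{i-1}$, compute a one-step factor, telescope) matches the paper, but your identification of the one-step event is wrong because the new random matrix enters the product on the \emph{right}, not the left. With the paper's conventions, $\im(M_1\cdots M_i) = (M_1\cdots M_{i-1})(\im M_i)$, and since $\abs{\Zp^n/F_{i-1}}<\infty$ forces $F_{i-1}$ to be free of rank $n$, the conditioned map $M_1\cdots M_{i-1}\colon \Zp^n\to F_{i-1}$ is an isomorphism; hence the conditional event is $\{\im(M_i) = (M_1\cdots M_{i-1})^{-1}(F_i)\}$, i.e.\ the image of the Haar-random $M_i$ equals a \emph{prescribed} sublattice of $\Zp^n$ of index $[F_{i-1}:F_i]=\abs{G_i}/\abs{G_{i-1}}$. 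Its probability is $[F_{i-1}:F_i]^{-n}\prod_{j=1}^n(1-p^{-j})$ by the columns-in-a-sublattice count plus Nakayama, and these factors telescope; no restriction of a random matrix to a sublattice ever occurs. Your one-step claim instead concerns $\Prob_M(M(F')=F'')$, the random matrix applied to the fixed lattice $F_{i-1}$ (this would correspond to the product $M_i\cdots M_1$, under which the images do not even form a flag). That is not the conditional probability needed, and it is not determined by the index $[F':F'']$: for $n=2$, $F'=p^2\Zp\oplus\Zp$, $F''=p^2\Zp\oplus p\Zp$ (so $[F':F'']=p$) one checks $\Prob_M(M(F')=F'')=p^{-3}(1-p^{-1})^2$, whereas the correct conditional factor for an index-$p$ step is $p^{-2}(1-p^{-1})(1-p^{-2})$. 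So no normalization depending only on the indices $\abs{G_j}/\abs{G_{j-1}}$ can make your telescoping produce \eqref{eq:prob-haar}, and your hedge ``whatever normalization makes the product collapse'' cannot be filled in.

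The auxiliary claim you lean on in the ``more transparent'' variant is also false: the restriction of a Haar-random $M\in\Mat_n(\Zp)$ to a finite-index sublattice $F'$ is \emph{not} Haar-random in $\Hom(F',\Zp^n)$; the pushforward is Haar measure on the image of the restriction map, a sublattice of $\Hom(F',\Zp^n)$ of index $\abs{\Zp^n/F'}^n$ (the numerical discrepancy in the counterexample above is exactly this defect). You correctly flagged this as the main obstacle, but it is not a bookkeeping issue to be patched—it is fatal to the route as proposed. The fix is to reduce, as the paper does, to the event that the columns of $M_i$ lie in the preimage lattice $(M_1\cdots M_{i-1})^{-1}(F_i)$ and that the induced map onto that free rank-$n$ lattice is surjective; then the Smith-normal-form and $k=1$ cokernel-statistics machinery you invoke at the end is not needed at all.
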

\begin{proof}
    Let $\mathcal{F}=(F_k\subeq \dots \subeq F_1\subeq \Zp^n)$. Since $\abs{G_k}=\abs{\Zp^n/F_k}<\infty$, every $F_i$ is a free module of rank $n$. We first pick $M_1$ with the condition that $\im(M_1)=F_1$. This means two things: (1) $M_1: \Zp^n\to \Zp^n$ has image in $F_1$. (2) The induced $\Zp$-linear map $M_1: \Zp^n \to F_1$ is surjective. 
    
    The probability that $\im(M_1)\subeq F_1$ is the probability that every column of $M_1$ lies in $F_1$. Hence $\Prob_{M_1\in \Mat_n(\Zp)}(\im(M_1)\subeq F_1)$ is $\abs{\Zp^n/F_1}^{-n} = \abs{G_1}^{-n}$. By Nakayama's lemma, since $F_1$ is of rank $n$, the probability that a Haar-random linear map $\Zp^n \to F_1$ be surjective is $\prod_{i=1}^n (1-p^{-i})$. As a result,
    \begin{equation}
        \Prob_{M_1\in \Mat_n(\Zp)}(\im(M_1)=F_1)=\abs{G_1}^{-n} \prod_{i=1}^n (1-p^{-i}).
    \end{equation}

    Now we fix $M_1$ and pick $M_2$ with the condition that $\im(M_1 M_2)=F_2$. We note that $M_1:\Zp^n\to F_1$ is an isomorphism because it is a surjective map between rank $n$ free modules over $\Zp$. Therefore, $M_1$ induces an isomorphism of flags from $M_1^{-1}(F_2)\subeq \Zp^n$ to $F_2\subeq F_1$. Thus, $\im(M_1 M_2)=F_2$ if and only if $\im(M_2)=M_1^{-1}(F_2)\subeq \Zp^n$. By the same argument as above, and the fact that $\abs{\Zp^n/M_1^{-1}(F_2)}=\abs{F_1/F_2}=\abs{G_2}/\abs{G_1}$, we get
    \begin{equation}
        \Prob_{M_2\in \Mat_n(\Zp)}\left(\im(M_1 M_2)=F_2|\im(M_1)=F_1\right) = \parens*{\frac{\abs{G_2}}{\abs{G_1}}}^{-n} \prod_{i=1}^n (1-p^{-i}).
    \end{equation}

    Repeating the argument and multiplying all of the above probabilities together, we conclude that
    \begin{equation}
        \begin{aligned}
            &\Prob_{M_1,\dots,M_k\in\Mat_n(\Zp)}(\itim(M_1,\dots,M_k)=\mathcal{F}) \\
            &=  \abs{G_1}^{-n} \prod_{i=1}^n (1-p^{-i}) \cdot \parens*{\frac{\abs{G_2}}{\abs{G_1}}}^{-n} \prod_{i=1}^n (1-p^{-i}) \cdot \dots \cdot \parens*{\frac{\abs{G_k}}{\abs{G_{k-1}}}}^{-n} \prod_{i=1}^n (1-p^{-i}) \\
            &=\abs{G_k}^{-n} \parens*{\prod_{i=1}^n (1-p^{-i})}^k. 
        \end{aligned}\qedhere
    \end{equation}
\end{proof}

\begin{proof}
    [Proof of Theorem \ref{thm:main}]
    Since $\bfcok(M_1,\dots,M_k)\simeq \Zp^n/\itim(M_1,\dots,M_k)$, we have
    \begin{equation}
        \Prob_{M_1,\dots,M_k\in \Mat_n(\Zp)}(\bfcok(M_1,\dots,M_k)\simeq \mathbf{G}) = \sum_{\substack{\mathcal{F}\in \calFl_k(\Zp^n)\\ \Zp^n/\mathcal{F}\simeq \mathbf{G}}} \Prob_{M_1,\dots,M_k\in \Mat_n(\Zp)}(\itim(M_1,\dots,M_k)=\mathcal{F}).
    \end{equation}

    Since the probability in Lemma~\ref{lem:prob-image} depends only on $\mathbf{G}$ but not on $\mathcal{F}$, the proof is complete by multiplying the results of Lemmas~\ref{lem:quot-count} and \ref{lem:prob-image}. 
\end{proof}

\section{Proof of Theorem \ref{thm:nonsquare}}
We state and prove a convenient lemma first.

\begin{lemma}\label{lem:needed}
    Let $A,B,C$ be free modules over $\Zp$ of finite ranks $a,b,c$, and suppose $f:B\onto A$ is a surjective linear map. Then for a Haar-random linear map $g:C\to B$, we have
    \begin{equation}
        \Prob_{g\in \Hom(C,B)}(\im(fg)=A) = \prod_{i=c-a+1}^c (1-p^{-i}).
    \end{equation}
\end{lemma}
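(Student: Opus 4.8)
The plan is to reduce the statement about the composite $fg$ to a statement about a single Haar-random map into a free module, which we can then handle by Nakayama's lemma exactly as in the proofs of Lemmas \ref{lem:quot-count} and \ref{lem:prob-image}. First I would observe that, since $f: B \onto A$ is a surjection of free $\Zp$-modules and $A$ is free (hence projective), the map $f$ splits: we may write $B \simeq A \oplus K$ with $K := \ker f$ free of rank $b - a$, and under this identification $f$ is the projection onto the first factor. Writing a Haar-random $g \in \Hom(C,B)$ in components as $g = (g_1, g_2)$ with $g_1 \in \Hom(C,A)$ and $g_2 \in \Hom(C,K)$, these components are independent and Haar-random, and $fg = g_1$. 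Hence $\Prob_{g}(\im(fg) = A) = \Prob_{g_1 \in \Hom(C,A)}(\im(g_1) = A)$, so the $g_2$ part is irrelevant and we have reduced to the case $B = A$, $f = \mathrm{id}$.

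It remains to compute the probability that a Haar-random linear map $g_1: C \to A$ between free $\Zp$-modules of ranks $c$ and $a$ is surjective. Concretely, after choosing bases, $g_1$ is an $a \times c$ Haar-random matrix over $\Zp$, and surjectivity is equivalent (by Nakayama's lemma) to surjectivity of the reduction $\bar g_1: \F_p^c \to \F_p^a$; since reduction mod $p$ pushes the Haar measure on $\Mat_{a\times c}(\Zp)$ forward to the uniform measure on $\Mat_{a\times c}(\F_p)$, we must count the proportion of $a \times c$ matrices over $\F_p$ of full row rank $a$. Building such a matrix row by row, the first row must be nonzero, the second must lie outside the span of the first, and so on; the $j$-th row must avoid a subspace of dimension $j-1$, which happens with probability $1 - p^{j-1-c}$. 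Multiplying for $j = 1, \dots, a$ gives $\prod_{j=1}^a (1 - p^{j-1-c}) = \prod_{i=c-a+1}^{c}(1 - p^{-i})$, as claimed. (Note this requires $c \geq a$ for the probability to be positive, which is automatic since $f$ is onto and a surjection $fg$ onto $A$ forces $c \geq a$.)

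The argument is essentially routine; the only point requiring a little care is the splitting in the first paragraph, i.e. that $B \simeq A \oplus \ker f$ compatibly with $f$, which is exactly where freeness (projectivity) of $A$ is used, and the observation that the pushforward of Haar measure under this direct-sum decomposition makes the two components independent and individually Haar-random. Everything after that is the standard Nakayama/row-rank count already invoked elsewhere in the paper, so I do not anticipate a genuine obstacle.
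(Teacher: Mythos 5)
Your proposal is correct and follows essentially the same route as the paper: the splitting $B\simeq A\oplus\ker f$ is exactly the paper's ``without loss of generality $f$ is projection to the first $a$ coordinates,'' after which $fg=g_1$ and the count of surjective maps $\Zp^c\to\Zp^a$ via Nakayama's lemma gives $\prod_{i=c-a+1}^c(1-p^{-i})$. The only difference is that you spell out the mod-$p$ row-by-row rank count that the paper simply cites, which is fine.
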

\begin{proof}
    Without loss of generality, we may assume $A=\Zp^a, B=\Zp^b, C=\Zp^c$ and $f:\Zp^b\to \Zp^a$ is the projection to the first $a$ coordinates. Write $g\in \Mat_{b\times c}(\Zp)$ as $g=\begin{bmatrix}
        g_1 \\
        g_2
    \end{bmatrix}$, where $g_1\in \Mat_{a\times c}(\Zp)$ and $g_2\in \Mat_{(b-a)\times c}(\Zp)$. Then $fg=g_1$. Thus, the probability that $fg$ be surjective is the probability that $g_1$ be surjective, which is $\prod_{i=c-a+1}^c (1-p^{-i})$ by Nakayama's lemma.
\end{proof}

To prove Theorem \ref{thm:nonsquare}, we follow the same argument as Theorem \ref{thm:main}, except that we need to prove a more general version of Lemma \ref{lem:prob-image}. Again, we fix $\mathbf{G}=(G_k\onto\dots\onto G_1)\in \Fl_k$, but we do not assume $\abs{G_k}<\infty$. Fix $\mathcal{F}\in\calFl_k(\Zp^n)$ such that $\Zp^n/\mathcal{F}\simeq \mathbf{G}$. For $1\leq i\leq k$, let $M_i\in \Mat_{(n+u_{i-1})\times (n+u_i)}$, where $u_0:=0$, and assume $M_1,\dots,M_k$ are independent and Haar-random.

\begin{lemma}\label{lem:prob-image-nonsquare}
    In the setting above, we have
    \begin{equation}\label{eq:prob-image-nonsquare}
        \Prob_{M_1,\dots,M_k\in\Mat_n(\Zp)}(\itim(M_1,\dots,M_k)=\mathcal{F}) = \abs{G_k}^{-n} \prod_{j=1}^k \parens*{\frac{\abs{G_j}}{\abs{G_{j-1}}}}^{-u_j}\cdot \prod_{j=1}^k \prod_{i=1}^{n} (1-p^{-i-u_j})
    \end{equation}
    if $\abs{G_k}<\infty$, and zero otherwise.
\end{lemma}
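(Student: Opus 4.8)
The plan is to mimic the inductive argument of Lemma~\ref{lem:prob-image}, but replacing each "surjectivity of a random square map" step by the non-square count in Lemma~\ref{lem:needed}, and being careful about the issue that we no longer know a priori that $\abs{G_k}<\infty$. First I would handle the finiteness dichotomy: if $\abs{G_k}=\infty$, then some $F_i=\im(M_1\cdots M_i)\subeq\Zp^n$ would have infinite index, which for a map out of a free module of rank $n+u_i$ forces $F_i$ not to be of full rank $n$; this is a measure-zero event at the first index $i$ where $\abs{G_i}=\infty$, so the probability is zero, matching the claim. So assume $\abs{G_k}<\infty$; then every $G_i$ is finite, every $F_i$ is free of rank $n$, and we may run the induction.

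The induction is on $i$, conditioning on the event $E_{i-1}=\{\im(M_1\cdots M_{i-1})=F_{i-1}\}$ and computing $\Prob(\im(M_1\cdots M_i)=F_i\mid E_{i-1})$. Given $E_{i-1}$, fix the matrices $M_1,\dots,M_{i-1}$; the composite $M_1\cdots M_{i-1}:\Zp^{n+u_{i-1}}\to F_{i-1}$ is surjective onto a rank-$n$ free module, so its kernel $K$ is free of rank $u_{i-1}$ and splits off, i.e.\ $\Zp^{n+u_{i-1}}\cong K\oplus B$ with $B\xrightarrow{\sim}F_{i-1}$. Decompose the requirement $\im(M_1\cdots M_i)=F_i$ into: (1) $\im(M_1\cdots M_i)\subeq F_i$, equivalently every column of $M_i$ maps into the preimage $(M_1\cdots M_{i-1})^{-1}(F_i)\subeq\Zp^{n+u_{i-1}}$, a submodule of index $\abs{F_{i-1}/F_i}=\abs{G_i}/\abs{G_{i-1}}$; since $M_i$ has $n+u_i$ columns this has probability $(\abs{G_i}/\abs{G_{i-1}})^{-(n+u_i)}$; and (2) conditionally on (1), the induced map $\Zp^{n+u_i}\to (M_1\cdots M_{i-1})^{-1}(F_i)$, composed with the projection $(M_1\cdots M_{i-1})^{-1}(F_i)\onto F_i/(\text{the }K\text{-part})\cong$ a rank-$n$ free module, must be surjective onto that rank-$n$ target. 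Here is where Lemma~\ref{lem:needed} applies with $c=n+u_i$, $a=n$, and $b=\operatorname{rank}(M_1\cdots M_{i-1})^{-1}(F_i)=n+u_{i-1}$: the conditional probability is $\prod_{\ell=(n+u_i)-n+1}^{n+u_i}(1-p^{-\ell})=\prod_{\ell=u_i+1}^{u_i+n}(1-p^{-\ell})=\prod_{m=1}^n(1-p^{-m-u_i})$. Multiplying (1) and (2) gives the $i$-th conditional factor $\parens*{\frac{\abs{G_i}}{\abs{G_{i-1}}}}^{-(n+u_i)}\prod_{m=1}^n(1-p^{-m-u_i})$.

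Finally I would multiply the $k$ conditional factors together. The product of the $(1-p^{-m-u_j})$-parts is exactly $\prod_{j=1}^k\prod_{i=1}^n(1-p^{-i-u_j})$. The product of the index-powers is $\prod_{j=1}^k\parens*{\frac{\abs{G_j}}{\abs{G_{j-1}}}}^{-(n+u_j)}$, which telescopes in the exponent-$n$ part to $\abs{G_k}^{-n}$ (using $\abs{G_0}=1$) and leaves $\prod_{j=1}^k\parens*{\frac{\abs{G_j}}{\abs{G_{j-1}}}}^{-u_j}$, yielding precisely the right-hand side of \eqref{eq:prob-image-nonsquare}. The main obstacle is the bookkeeping in step~(2): correctly identifying the free summand of $(M_1\cdots M_{i-1})^{-1}(F_i)$ on which surjectivity must be tested and matching it to the hypotheses of Lemma~\ref{lem:needed} (in particular that the relevant $f$ is the surjection $B\cong F_{i-1}\onto F_{i-1}/(F_{i-1}\cap\text{something})$; cleanest is to phrase it as: $(M_1\cdots M_{i-1})^{-1}(F_i)\onto F_{i-1}/F_i'$ where one checks $F_i$ maps isomorphically, so the target is a rank-$n$ quotient and Lemma~\ref{lem:needed} gives the surjectivity probability). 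Everything else is the same telescoping as in Lemma~\ref{lem:prob-image}, and the $\abs{G_k}=\infty$ case is the promised byproduct giving almost-sure finiteness of $\cok(M_1\cdots M_k)$.
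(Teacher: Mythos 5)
Your proposal is correct and follows essentially the same route as the paper: condition step by step, split each step into the containment event (probability $(\abs{G_i}/\abs{G_{i-1}})^{-(n+u_i)}$ via the index of $(M_1\cdots M_{i-1})^{-1}(F_i)$) and the conditional surjectivity event handled by Lemma~\ref{lem:needed} with $a=n$, $b=n+u_{i-1}$, $c=n+u_i$, then telescope; the infinite case is likewise disposed of by a zero-probability containment at the first infinite index. The only wording to tighten is the target of the surjection in step (2): it is simply $A=F_i$ with $f$ the restriction of $M_1\cdots M_{i-1}$ to $B=(M_1\cdots M_{i-1})^{-1}(F_i)$ (kernel $K$), not a quotient such as ``$F_i/(\text{the }K\text{-part})$'' or ``$F_{i-1}/F_i'$'', which is exactly how the paper phrases it.
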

\begin{proof}
    Let $\mathcal{F}=(F_k\subeq \dots \subeq F_1\subeq \Zp^n)$. We first pick $M_1\in \Mat_{n\times (n+u_1)}$ with $\im(M_1)=F_1$. If $\abs{G_1}=\infty$, then $\abs{\Zp^n/F_1}=\infty$, so the probability that each column of $M_1$ be in $F_1$ is zero. Therefore, we may assume $\abs{G_1}<\infty$ from now on. As a result, $F_1$ is free of rank $n$. By the similar argument in the proof of Lemma \ref{lem:prob-image}, we get
    \begin{equation}\label{eq:step1}
        \Prob_{M_1\in \Mat_{n\times (n+u_1)}(\Zp)}(\im(M_1)=F_1)=\abs{G_1}^{-(n+u_1)} \prod_{i=u_1+1}^{n+u_1} (1-p^{-i})=\abs{G_1}^{-(n+u_1)} \prod_{i=1}^{n} (1-p^{-i-u_1}).
    \end{equation}

    Now we fix $M_1$ and pick $M_2\in \Mat_{(n+u_1)\times (n+u_2)}$ with the condition that $\im(M_1 M_2)=F_2$, which is equivalent to $\im(M_2)\subeq M_1^{-1}(F_2)$. The third isomorphism theorem applied to the surjection $M_1:\Zp^{n+u_1}\to F_1$ gives an isomorphism $\Zp^{n+u_1}/M_1^{-1}(F_2) \simeq F_1/F_2$, so $\abs{\Zp^{n+u_1}/M_1^{-1}(F_2)} =\abs{F_1/F_2}=\abs{G_2}/\abs{G_1}$. Thus, 
    \begin{equation}\label{eq:needed}
        \Prob_{M_2\in \Mat_{(n+u_1)\times (n+u_2)}}(\im(M_1M_2)\subeq F_2 | \im(M_1)=F_1) = \parens*{\frac{\abs{G_2}}{\abs{G_1}}}^{-(n+u_2)}
    \end{equation}
    if $\abs{G_2}<\infty$, and zero otherwise. So again, we may assume $\abs{G_2}<\infty$ from now on.
    
    We now find the probability that $\im(M_1 M_2)=F_2$ conditioned on $\im(M_1 M_2)\subeq F_2$, so $M_2$ is a Haar-random linear map in $\Hom(\Zp^{n+u_2},M_1^{-1}(F_2))$. Note that we are in the setting of Lemma \ref{lem:needed} with $A=F_2$, $B=M_1^{-1}(F_2)$, $C=\Zp^{n+u_2}$, $f=M_1:M_1^{-1}(F_2)\onto F_2$, and $g=M_2: \Zp^{n+u_2}\to M_1^{-1}(F_2)$. The condition that $\im(M_1 M_2)=F_2$ is equivalent to $\im(fg)=A$. Since $\abs{\Zp^n/F_2}=\abs{G_2}<\infty$, $A$ is free of rank $n$. Since $\abs{\Zp^{n+u_1}/B}=\abs{G_2}/\abs{G_1}<\infty$, $B$ is free of rank $n+u_1$. By Lemma \ref{lem:needed} with $a=n$ and $c=n+u_2$, we get
    \begin{equation}
        \Prob_{M_2\in \Mat_{(n+u_1)\times (n+u_2)}}(\im(M_1M_2)=F_2 | \im(M_1)=F_1, \im(M_1M_2)\subeq F_2) = \prod_{i=u_2+1}^{n+u_2} (1-p^{-i}).
    \end{equation}
    Combined with \eqref{eq:needed}, we get
    \begin{equation}\label{eq:step2}
        \Prob_{M_2\in \Mat_{(n+u_1)\times (n+u_2)}}(\im(M_1M_2)=F_2 | \im(M_1)=F_1) = \parens*{\frac{\abs{G_2}}{\abs{G_1}}}^{-(n+u_2)} \prod_{i=1}^n (1-p^{-i-u_2}).
    \end{equation}

    Repeating the argument inductively and multiplying the probabilities in \eqref{eq:step1}, \eqref{eq:step2}, and so on, the desired formula \eqref{eq:prob-image-nonsquare} follows, along with the fact that each $G_i$ must be finite in order for the probability to be nonzero. 
\end{proof}

\begin{proof}
    [Proof of Theorem \ref{thm:nonsquare}]
    Multiply the results of Lemma \ref{lem:quot-count} and Lemma \ref{lem:prob-image-nonsquare}.
\end{proof}

\section{Probability measures on flags}\label{sec:measure}
Here we prove Corollary \ref{cor:normalize} as an immediate consequence of Theorem \ref{thm:nonsquare}.

\begin{proof}
    [Proof of Corollary \ref{cor:normalize}]
    It is clear that $P_{n,(t_1,\dots,t_k)}$ is a nonnegative measure, so it suffices to show that $\sum_{\mathbf{G}\in \Fl_k} P_{n,(t_1,\dots,t_k)}(\mathbf{G})=1$. We note that the measure in \eqref{eq:prob-nonsquare} is precisely $P_{n,(p^{-u_1},\dots,p^{-u_k})}$. In particular, Theorem \ref{thm:nonsquare} implies that $P_{n,(p^{-u_1},\dots,p^{-u_k})}$ is a probability measure for every $u_1,\dots,u_k\in \Z_{\geq 0}$, so the formal power series $\sum_{\mathbf{G}\in \Fl_k} P_{n,(t_1,\dots,t_k)}(\mathbf{G})$ in $t_1,\dots,t_k$ must be 1, completing the proof. 
    
    The case of $P_{\infty,(t_1,\dots,t_k)}$ is similar.
\end{proof}

Write $\mathbf{G}=(G_k\onto\dots \onto G_1)$. We compute the joint distribution of $(G_1,\dots,G_k)$ if $\mathbf{G}$ is distributed according to \eqref{eq:measure-n}. For partitions $\lambda^{(1)},\dots,\lambda^{(k)}$, we write $\mathbf{G}\sim (\lambda^{(1)},\dots,\lambda^{(k)})$ if the type of $G_i$ is $\lambda^{(i)}$. 

\begin{proposition}\label{prop:joint}
    Fixing partitions $\lambda^{(1)},\dots,\lambda^{(k)}$. For $P_{n,(t_1,\dots,t_k)}(\mathbf{G})$ as in \eqref{eq:measure-n}, the quantity
    \begin{equation}
        \sum_{\mathbf{G}\sim (\lambda^{(1)},\dots,\lambda^{(k)})} P_{n,(t_1,\dots,t_k)}(\mathbf{G})
    \end{equation}
    is given by \cite[p.~19, Prop.~2.6]{vanpeski2021limits} with $q=0$, $a^{(i)}_j=t_i p^{-j}$, $\mathbf{b}=(1,p^{-1},\dots,p^{-(n-1)})$, and $t=1/p$ in their notation.
\end{proposition}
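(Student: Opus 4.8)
The plan is to establish the identity by ``polynomiality plus Zariski density'': I would check that both sides agree at every point $(t_1,\dots,t_k)=(p^{-u_1},\dots,p^{-u_k})$ with $u_1,\dots,u_k\in\Z_{\geq 0}$, and then upgrade to the stated identity of functions of $t_1,\dots,t_k$. This route is essentially forced, since the left-hand side involves the sum $c(\lambda^{(1)},\dots,\lambda^{(k)}):=\sum_{\mathbf{G}\sim(\lambda^{(1)},\dots,\lambda^{(k)})}1/\abs{\Aut(\mathbf{G})}$, for which no closed form is available when $k\geq 2$, so the random matrix model is the only handle. The first observation is that the left-hand side is a polynomial in $t_1,\dots,t_k$: for a flag of type $(\lambda^{(1)},\dots,\lambda^{(k)})$ the exponents $n_j(\mathbf{G})=\abs{\lambda^{(j)}}-\abs{\lambda^{(j-1)}}$ and the rank $r(G_k)=\ell(\lambda^{(k)})$ depend only on the type, so by \eqref{eq:measure-n} the sum in question equals
\[
c(\lambda^{(1)},\dots,\lambda^{(k)})\cdot\prod_{j=1}^k t_j^{\,\abs{\lambda^{(j)}}-\abs{\lambda^{(j-1)}}}\cdot\prod_{i=n-\ell(\lambda^{(k)})+1}^{n}(1-p^{-i})\cdot\prod_{j=1}^k\prod_{i=1}^{n}(1-p^{-i}t_j),
\]
and $c(\lambda^{(1)},\dots,\lambda^{(k)})$ is a finite positive constant, there being only finitely many flags of a given type.

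Next, at $t_i=p^{-u_i}$ I would identify this quantity with a random matrix probability and match it to \cite[Prop.~2.6]{vanpeski2021limits}. By Definition~\ref{def:measure} the measure $P_{n,(p^{-u_1},\dots,p^{-u_k})}$ coincides with the one in \eqref{eq:prob-nonsquare}, so Theorem~\ref{thm:nonsquare} shows that the sum above, evaluated at $t_i=p^{-u_i}$, equals the probability that $\cok(M_1\cdots M_j)$ has type $\lambda^{(j)}$ for every $j$, where $M_i\in\Mat_{(n+u_{i-1})\times(n+u_i)}(\Zp)$ ($u_0:=0$) are independent and Haar-random. On the other hand, \cite[Prop.~2.6]{vanpeski2021limits} evaluates exactly such a marginal law for products of $p$-adic random matrices in terms of Hall--Littlewood data, and I would check that under the stated dictionary $q=0$, $t=1/p$, $\mathbf{b}=(1,p^{-1},\dots,p^{-(n-1)})$, $a^{(i)}_j=p^{-u_i}p^{-j}$ it returns precisely the cited expression specialized at $t_i=p^{-u_i}$. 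Write $\Psi(t_1,\dots,t_k)$ for the cited expression as a function of the $t_i$; by the explicit formula in \cite[Prop.~2.6]{vanpeski2021limits} it is rational, with denominator a product of factors of the form $1-p^{-m}\prod_{j\in S}t_j$, $m\geq 1$, none of which vanishes at a point $(p^{-u_1},\dots,p^{-u_k})$ with $u_i\in\Z_{\geq 0}$ (since $0<p^{-u_i}\leq 1<p^m$). Thus the polynomial displayed in the first paragraph and $\Psi$ agree on $\{(p^{-u_1},\dots,p^{-u_k}):u_i\in\Z_{\geq 0}\}$, a Zariski-dense subset of $\mathbb{A}^k$ disjoint from the polar locus of $\Psi$; since a rational function agreeing with a polynomial on a Zariski-dense set is that polynomial, the identity follows for all $t_i$.

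The step I expect to be the main obstacle is the conventions-matching when invoking \cite[Prop.~2.6]{vanpeski2021limits}: verifying that, read with the stated specialization, it genuinely computes the marginal of the type-trajectory $(\lambda^{(1)},\dots,\lambda^{(k)})$ of $\bfcok(M_1,\dots,M_k)$ for our exact family of matrix shapes $(n+u_{i-1})\times(n+u_i)$. This is a careful but routine dictionary --- the $q\to0$ degeneration of the Macdonald process to the Hall--Littlewood process, the Hall--Littlewood parameter $t=1/p$ appropriate to $\Zp$-modules, the role of the principal specialization $\mathbf{b}$ versus the specializations $a^{(i)}$, the partition (transpose and indexing) conventions, and the translation between the ``$u_i$ extra columns'' of $M_i$ and the shift $a^{(i)}_j=p^{-u_i}p^{-j}$. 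Once this is pinned down, the polynomiality and Zariski-density steps are straightforward.
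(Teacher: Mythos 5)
Your proposal is correct in outline, but it takes a genuinely different route from the paper. The paper anchors the identity at the single point $t_1=\dots=t_k=1$: by Theorem \ref{thm:main}, the sum there is the probability that $\cok(M_1\cdots M_i)$ has type $\lambda^{(i)}$ for all $i$ when $M_1,\dots,M_k\in\Mat_n(\Zp)$ are square Haar matrices, which is cited as \cite[Cor.~3.4]{vanpeski2021limits} with $N_i=\infty$; it then passes to general $t$ by the exact ratio identity
\begin{equation*}
\frac{\sum_{\mathbf{G}\sim(\lambda^{(1)},\dots,\lambda^{(k)})}P_{n,(t_1,\dots,t_k)}(\mathbf{G})}{\sum_{\mathbf{G}\sim(\lambda^{(1)},\dots,\lambda^{(k)})}P_{n,(1,\dots,1)}(\mathbf{G})}
=\prod_{j=1}^k\parens*{t_j^{n_j}\prod_{i=1}^n\frac{1-p^{-i}t_j}{1-p^{-i}}},
\end{equation*}
which is exactly the observation in your first display that, for fixed type, the $t$-dependence of \eqref{eq:measure-n} is explicit because $n_j(\mathbf{G})$ and $r(G_k)=\ell(\lambda^{(k)})$ depend only on the type. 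You instead anchor at the grid $t_i=p^{-u_i}$ via the non-square Theorem \ref{thm:nonsquare} and conclude by rationality plus Zariski density; this interpolation-at-integer-points trick is the same one the paper uses to prove Corollary \ref{cor:normalize}, and your claims about rationality and pole location of the specialized Hall--Littlewood expression are correct in substance (the relevant normalizing product telescopes to factors $(1-t_jp^{-m})^{-1}$ with $m\geq 1$, so no poles at the grid points). The trade-off: the paper needs the matrix-model input only at $t=1$, plus the (unspelled but easy) fact that the cited formula scales in $t$ by the same ratio factor, via homogeneity of the skew Hall--Littlewood specialization; your route needs no structural knowledge of the cited formula beyond rationality, but it needs the heavier input that \cite[Prop.~2.6]{vanpeski2021limits} at $a^{(i)}_j=p^{-u_i-j}$ computes the type-marginal of $\bfcok(M_1,\dots,M_k)$ for the rectangular Haar model at \emph{every} $(u_1,\dots,u_k)$ --- precisely the dictionary step you defer as ``routine.'' That identification is indeed available in van Peski's work (it is the content of his matrix-product-to-Hall--Littlewood-process results), so your argument goes through, but be aware that this matching is the crux of the proof rather than an afterthought, and the paper's one-point-plus-ratio argument gets the general-$t$ statement without any density step.
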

\begin{proof}
    If $t_i=1$, by Theorem \ref{thm:main}, the quantity in question is the probability that $\cok(M_1\dots M_i)$ is of type $\lambda^{(i)}$ for all $i$, where $M_1,\dots,M_k \in \Mat_n(\Zp)$ are independent and Haar-random. This is given by \cite[p.~27, Cor.~3.4]{vanpeski2021limits} with $N_i=\infty$ in their notation. 

    To go from the $t_i=1$ case to the general case, we notice that if we fix $\varnothing=\lambda^{(0)},\lambda^{(1)},\dots,\lambda^{(k)}$ and letting $n_i=\abs{\lambda^{(i)}}-\abs{\lambda^{(i-1)}}$ for $1\leq i\leq k$, then from \eqref{eq:measure-n}, we have
    \begin{equation}
        \frac{\sum_{\mathbf{G}\sim (\lambda^{(1)},\dots,\lambda^{(k)})} P_{n,(t_1,\dots,t_k)}(\mathbf{G})}{\sum_{\mathbf{G}\sim (\lambda^{(1)},\dots,\lambda^{(k)})} P_{n,(1,\dots,1)}(\mathbf{G})} = \prod_{j=1}^k \parens*{t_j^{n_j}\prod_{i=1}^n \frac{1-p^{-i}t_j}{1-p^{-i}}}.
    \end{equation}
    Combining this with the $t_i=1$ case above finally gives the desired formula. 
\end{proof}

\subsection{Relation to matrices over finite fields}\label{subsec:matrix}
We give a direct proof of Proposition \ref{prop}, a function field analogue of \eqref{eq:flag-cl}, by establishing \eqref{eq:flag-comm} that connects it to counting matrices over finite fields. Let $\Fq$ be the finite field with $q$ elements, and let $R$ denote the power series ring $\Fq[[T]]$. We can similarly define $\Fl_k(R)$ to be the set of $k$-surjective flags of $R$-modules. Given $1\leq i\leq k$ and any flag $\mathbf{G}=(G_k\onto \dots \onto G_0=0)\in \Fl_k(R)$ such that $\dim_{\Fq} G_k<\infty$, let $n_i(\mathbf{G}):=\dim_{\Fq} G_i -\dim_{\Fq} G_{i-1}$. Define the \textbf{flag Cohen--Lenstra series} of $R$ as
\begin{equation}\label{eq:flag-cl-def}
    \Zhat_{R}(t_1,\dots,t_k):=\sum_{\substack{\mathbf{G}\in \Fl_k(R)\\ \dim G_k<\infty}} \frac{\prod_{j=1}^k t_j^{n_j(\mathbf{G})}}{\abs{\Aut(\mathbf{G})}}\in \Q[[t_1,\dots,t_k]].
\end{equation}
When $k=1$, this construction is precisely the Cohen--Lenstra series defined in \cite{huang2023mutually}.

\begin{proposition}\label{prop}
    We have 
    \begin{equation}
        \Zhat_{\Fq[[T]]}(t_1,\dots,t_k) = \prod_{j=1}^k \prod_{i=1}^\infty \frac{1}{1-q^{-i} t_j}.
    \end{equation}
\end{proposition}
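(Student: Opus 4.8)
The plan is to rewrite the left-hand side as a weighted count of pairs consisting of a nilpotent matrix over $\Fq$ and a compatible flag — this is what \eqref{eq:flag-comm} below will record — and then to evaluate that count using the block structure of parabolic subalgebras of $\mathfrak{gl}_n$ together with Euler's $q$-exponential identity.

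First I would set up the dictionary over $R=\Fq[[T]]$. A finite-length $R$-module is the same datum as a finite-dimensional $\Fq$-vector space $V$ equipped with a nilpotent endomorphism $N$ (the action of $T$), and its $R$-module automorphism group is the centralizer $Z_{\GL(V)}(N)$. Unwinding the correspondence of \S\ref{subsec:dict} over $R$, a $k$-surjective flag $\mathbf{G}$ with $\dim_{\Fq}G_k<\infty$ amounts to such a pair $(V,N)$ together with a chain of $N$-invariant subspaces $0=V_k\subseteq V_{k-1}\subseteq\cdots\subseteq V_0=V$, via $G_i=V/V_i$; under this dictionary $n_i(\mathbf{G})=\dim_{\Fq}(V_{i-1}/V_i)$ and $\Aut(\mathbf{G})$ is the stabilizer of $V_\bullet$ inside $Z_{\GL(V)}(N)$. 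Grouping the sum \eqref{eq:flag-cl-def} by $n:=\dim_{\Fq}G_k$ and applying orbit--stabilizer to the conjugation action of $\GL_n(\Fq)$ on the finite set $X_n$ of pairs $(N,V_\bullet)$ with $N\in\mathfrak{gl}_n(\Fq)$ nilpotent and $V_\bullet$ an $N$-invariant $k$-step flag in $\Fq^n$ — noting that the weight is $\GL_n(\Fq)$-invariant and that the stabilizer of $(N,V_\bullet)$ is precisely $\Aut(\mathbf{G})$ — will give
\begin{equation}\label{eq:flag-comm}
    \Zhat_{\Fq[[T]]}(t_1,\dots,t_k)=\sum_{n\geq 0}\frac{1}{\abs{\GL_n(\Fq)}}\sum_{(N,V_\bullet)\in X_n}\prod_{j=1}^k t_j^{\dim_{\Fq}(V_{j-1}/V_j)}.
\end{equation}

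Next I would evaluate the inner sum by stratifying $X_n$ according to the type $(e_1,\dots,e_k)$ of the flag, $e_j:=\dim_{\Fq}(V_{j-1}/V_j)$, where $(e_1,\dots,e_k)$ ranges over $k$-tuples of nonnegative integers summing to $n$. Since $\GL_n(\Fq)$ acts transitively on flags of a fixed type, the number of pairs in $X_n$ of type $(e_1,\dots,e_k)$ equals the number of such flags, namely $\abs{\GL_n(\Fq)}/\abs{P}$ for the associated block-triangular parabolic $P$, times the number of nilpotent elements of the parabolic subalgebra $\mathfrak{p}\subseteq\mathfrak{gl}_n(\Fq)$ consisting of endomorphisms preserving one fixed flag of that type. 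The crucial input is that an element of $\mathfrak{p}=\mathfrak{l}\oplus\mathfrak{n}$ (Levi plus nilradical) is nilpotent if and only if its image in $\mathfrak{l}\cong\prod_{j}\mathfrak{gl}_{e_j}$ is nilpotent, so this count equals $\abs{\mathfrak{n}}\cdot\prod_j q^{e_j^2-e_j}$ by the Fine--Herstein formula for the number of nilpotent $e_j\times e_j$ matrices; combined with $\abs{P}=\abs{\mathfrak{n}}\cdot\prod_j\abs{\GL_{e_j}(\Fq)}$ and $\abs{\GL_e(\Fq)}=q^{e^2}\prod_{i=1}^e(1-q^{-i})$, the nilradical terms cancel and the number of type-$(e_1,\dots,e_k)$ pairs becomes $\abs{\GL_n(\Fq)}\prod_{j=1}^k\frac{q^{-e_j}}{\prod_{i=1}^{e_j}(1-q^{-i})}$.

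Substituting this into \eqref{eq:flag-comm}, the factor $\abs{\GL_n(\Fq)}$ cancels and the resulting sum over all $n$ and all types factors over the index $j$, giving
\[
    \Zhat_{\Fq[[T]]}(t_1,\dots,t_k)=\prod_{j=1}^k\ \sum_{e\geq 0}\frac{(q^{-1}t_j)^e}{\prod_{i=1}^e(1-q^{-i})}.
\]
Finally, Euler's identity $\sum_{e\geq 0}z^e/\prod_{i=1}^e(1-Q^i)=\prod_{i\geq 0}(1-zQ^i)^{-1}$, valid as a formal power series in $z$ for $\abs{Q}<1$, applied with $Q=q^{-1}$ and $z=q^{-1}t_j$, turns the $j$-th factor into $\prod_{i\geq 0}(1-q^{-1-i}t_j)^{-1}=\prod_{i\geq 1}(1-q^{-i}t_j)^{-1}$, which is the claimed formula. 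I expect the middle paragraph to be the main obstacle: counting the nilpotent elements of a parabolic subalgebra of $\mathfrak{gl}_n(\Fq)$ and checking that the various $q$-powers — the nilradical dimension, the Levi block contributions, and the orders of the $\GL_{e_j}$ — collapse to the clean product; the orbit-counting behind \eqref{eq:flag-comm} and the closing appeal to Euler's identity are routine.
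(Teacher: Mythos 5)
Your proof is correct and follows essentially the same route as the paper: translate flags of $\Fq[[T]]$-modules into nilpotent matrices preserving a flag of subspaces (i.e.\ nilpotent elements of a block-triangular parabolic subalgebra), apply orbit--stabilizer, observe that nilpotency is detected on the Levi blocks, and finish with Fine--Herstein and Euler's identity. The only difference is cosmetic: the paper applies orbit--stabilizer with the parabolic group $\GL_{n_1,\dots,n_k}(\Fq)$ acting on $\Nilp_{n_1,\dots,n_k}(\Fq)$, whereas you use the full $\GL_n(\Fq)$ acting on pairs $(N,V_\bullet)$ and then divide through the flag count, which yields the same ratio after the nilradical factors cancel.
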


\begin{proof}
    To specify a $k$-surjective flag $\mathbf{G}$ of $\Fq[[T]]$-modules with given $n_i(\mathbf{G})=n_i$, it suffices to specify the $k$-surjective flag of underlying $\Fq$-vector spaces
    \begin{equation}
        V_k \overset{\phi_{k-1}}{\onto} \dots \overset{\phi_1}{\onto} V_1\onto V_0=0,
    \end{equation}
    together with $\Fq[[T]]$-module structures on $V_i$ that are compatible with the flag. Up to isomorphism of flags of $\Fq$-vector spaces, we may assume $V_i=\Fq^{n_1+\dots+n_i}$, and the surjection $\phi_{i}$ is the projection to the first $n_1+\dots+n_i$ coordinates. Then the compatible $\Fq[[T]]$-module structures on $V_i$ are determined by a nilpotent endomorphism $M$ on $V_k$ (as the multiplication by $T$ map), such that $M$ factors through an endomorphism on $V_i$ for each $i$. If $W_i:=\ker(V_k\onto V_i)$ for $0\leq i\leq k$, then this simply means $MW_i\subeq W_i$ for $0\leq i\leq k$. Since $W_i$ is the span of the last $n_{i+1}+\dots+n_k$ basis vectors in $V_k=\Fq^{n_1+\dots+n_k}$, this happens if and only if $M$ is \textbf{block-lower-triangular} with respect to the block sizes $n_1,\dots,n_k$ in both rows and columns. We denote the set of such block-lower-triangular matrices by $\Mat_{n_1,\dots,n_k}(\Fq)$.

    Let $\GL_{n_1,\dots,n_k}(\Fq)$, $\Nilp_{n_1,\dots,n_k}(\Fq)$ be the set of invertible \textit{resp.} nilpotent matrices in $\Mat_{n_1,\dots,n_k}(\Fq)$. By the discussion above, $k$-surjective flags of $\Fq[[T]]$-modules with a given dimension vector $(n_i)$ are parametrized by $\Nilp_{n_1,\dots,n_k}(\Fq)$. The group $\GL_{n_1,\dots,n_k}(\Fq)$ acts on $\Nilp_{n_1,\dots,n_k}(\Fq)$ by conjugation. The orbits correspond to isomorphism classes of flags of $\Fq[[T]]$-modules, and the stabilizers correspond to automorphisms of flags of $\Fq[[T]]$-modules. By a standard argument involving the orbit-stabilizer theorem, we get
    \begin{equation}\label{eq:flag-comm}
        \Zhat_{\Fq[[T]]}(t_1,\dots,t_k) = \sum_{n_1,\dots,n_k\geq 0} \frac{\abs{\Nilp_{n_1,\dots,n_k}(\Fq)}}{\abs{\GL_{n_1,\dots,n_k}(\Fq)}} t_1^{n_1}\dots t_k^{n_k}.
    \end{equation}

    Since a matrix in $\Mat_{n_1,\dots,n_k}(\Fq)$ is invertible \textit{resp.} nilpotent if and only if every diagonal block is invertible \textit{resp.} nilpotent, the generating series simplifies to
    \begin{equation}
        \Zhat_{\Fq[[T]]}(t_1,\dots,t_k) = \prod_{j=1}^k \sum_{n_j\geq 0} \frac{\abs{\Nilp_{n_j}(\Fq)}}{\abs{\GL_{n_j}(\Fq)}} t_j^{n_j}.
    \end{equation}

    But it is well-known that the sum equals to $\prod_{i=1}^\infty \frac{1}{1-q^{-i} t_j}$; for instance, we may use the theorem $\abs{\Nilp_n(\Fq)}=q^{n^2-n}$ of Fine and Herstein \cite{fineherstein1958}, and then apply an identity of Euler \cite[Eq.~(2.2.5)]{andrewspartitions}. This completes the proof.
\end{proof}

\begin{remark*}
    Of course, $\Mat_{n_1,\dots,n_k}(\Fq)$ and $\GL_{n_1,\dots,n_k}(\Fq)$ are nothing but the parabolic subalgebra and the parabolic subgroup of a suitable partial flag variety of type A. As is made apparent by Equation \eqref{eq:flag-comm}, the construction \eqref{eq:flag-cl-def} fits naturally into the study of commuting varieties of parabolic subalgebras; we refer the readers to \cite{buloisevain2016} for a geometric aspect of such research. 
\end{remark*}

\bibliography{flag-cohen-lenstra-arxiv.bbl}
   
\end{document}